\documentclass{article}[12pt]
\usepackage[utf8]{inputenc}
\usepackage{authblk}
\usepackage{amsmath, amssymb, amsthm, amsfonts, gensymb, tikz, commath, float, mathtools, enumerate, breqn, circuitikz}
\usetikzlibrary{positioning, arrows, arrows.meta, cd}
\usetikzlibrary{shapes,backgrounds,positioning,petri,topaths,calc}

\usepackage[all]{xy}
\usepackage{tabularx}
\usepackage{multirow}
\usepackage{esvect}
\usepackage{subcaption}
\usepackage{stmaryrd} 

\usepackage[text={15cm,22cm}]{geometry} 
\usepackage[colorlinks=true,%
            linkcolor=red!50!black,%
            citecolor=blue!50!black,%
            urlcolor=darkgray]{hyperref}  
\theoremstyle{plain}

\newtheorem{lem}{Lemma}[section]
\newtheorem{thm}{Theorem}
\newtheorem*{thmE}{Euclide's Theorem}
\newtheorem{cor}[lem]{Corollary}
\newtheorem{prop}[lem]{Proposition}
\newtheorem*{conj}{Conjecture}

\theoremstyle{definition}
\newtheorem*{rem}{Remark}
\newtheorem{ex}[lem]{Example}

\newtheorem{defn}[lem]{Definition}

\newcommand{\Z}{\mathbb{Z}}

\newcommand{\Q}{\mathbb{Q}}

\newcommand{\A}{\mathcal{A}}
\newcommand{\B}{\mathcal{B}}
\newcommand{\cC}{\mathcal{C}}
\newcommand{\cD}{\mathcal{D}}

\newcommand{\cN}{\mathcal{N}}

\newcommand{\Tr}{\textup{Tr}}

\newcommand{\SL}{\mathrm{SL}}
\newcommand{\PSL}{\mathrm{PSL}}

\newcommand{\half}{\frac{1}{2}}

\def\a{\alpha}
\def\b{\beta}
\def\d{\delta}

\def\g{\gamma}

\title{Quantizing Pythagorean triples}

\author{
Hugo Mathevet,
Sophie Morier-Genoud, 
Valentin Ovsienko}

\date{}

\begin{document}

\maketitle

A classical Pythagorean triple $(a,b,c)$ is a triplet
of positive integers $a,b$ and $c$ satisfying the Diophantine equation 
$$
a^2+b^2=c^2
$$
called the Pythagoras equation.
This antique subject has always been and remains an active field of research.
For a detailed account of its historical development,
the reader is invited to consult
Sierpi\'nski's classical book~\cite{Sie}.
A surprising and curious idea of developing the entire number theory through the 
Pythagoras prism is proposed in~\cite{Tak}.

A Pythagorean triple $(a,b,c)$ is called {\it primitive} if $a,b,c$ are coprime,
but we will also be interested in the case where the greater common divisor
$\gcd(a,b,c)$ of $a,b,c$ equals $2$.
We have two possibilities
$$
\gcd(a,b,c)=
\left\{
\begin{array}{l}
1,\\
2.
\end{array}
\right.
$$
When $\gcd(a,b,c)=1$, we will require that $a$ is even, 
when $\gcd(a,b,c)=2$, we will require that $a/2$ is odd.
Such Pythagorean triple $(a,b,c)$ is sometimes called {\it standard}; see, e.g.~\cite{Tra}.

The Euclide formula provides a simple way to associate a Pythagorean triple
with an arbitrary rational number~$\frac{m}{n}\geq1$.
Consider positive coprime integers $m\geq n$, then it is easy to check that
\begin{equation}
\label{EuclForm}
a=2mn,
\qquad
b=m^2-n^2,
\qquad
c=m^2+n^2
\end{equation}
form a Pythagorean triple.
To some extent, the following statement can be attributed to Euclide.

\begin{thmE}
\label{ClassThm}
For every standard Pythagorean triple
there exist coprime positive integers $m,n$ such that 
the triplet $(a,b,c)$ are given by~\eqref{EuclForm}.
\end{thmE}

In other words, standard Pythagorean triples are parametrized by rationals~$\frac{m}{n}>1$.
For instance, 
the first nontrivial Pythagorean triple $(4,3,5)$ corresponds to $\frac{2}{1}$,
the next standard (but not primitive!) triple $(6,8,10)$ corresponds to $\frac{3}{1}$,
then  $\frac{3}{2}$ produces $(12,5,13)$, etc.

The Pythagorean triple~\eqref{EuclForm} is primitive if and only if
$m$ and $n$ are coprime and not both odd.
For the  standard triples, this restriction is removed, that is,
$m$ and $n$ are arbitrary positive coprime integers.
This is an important reason to extend considerations from primitive to standard triples.

\section{The $q$-deformed Pythagoras equation}

The goal of this article is to introduce and study 
a new natural $q$-analogue of the Pythagoras equation.
We consider three polynomials,
$\A,\B,\cC$, in one variable denoted by~$q$ (following a certain tradition).
We say that $\A,\B,\cC$ satisfy the $q$-deformed Pythagoras equation if
\begin{equation}
\label{PythEq}
\A(q)^2+q\B(q)^2=\cC(q)\cC^*(q),
\end{equation}
where $\cC^*$ is the polynomial reciprocal to $\cC$, i.e.
\begin{equation}
\label{RecipEq}
\cC^*(q)=q^{\deg(\cC)}\cC(q^{-1}).
\end{equation}

To give an idea about the polynomials appearing
in our context, consider two elementary examples.
More examples will be given later.

(1)
Our first nontrivial example of solution to~\eqref{PythEq} is
the only solution corresponding to the first nontrivial primitive Pythagorean triple $(a,b,c)=(4,3,5)$
and satisfying the conditions \ref{Con1}--\ref{Con3}:
$$
\A(q)=1+q+q^2+q^3=:\left[4\right]_q,
\qquad\qquad
\B(q)=1+q+q^2=:\left[3\right]_q,
$$
with $\cC$ and $\cC^*$ given by
$$
\cC(q)=1+2q+q^2+q^3
\qquad\quad\hbox{and}\quad\qquad
\cC^*(q)=1+q+2q^2+q^3.
$$
Note that $\cC$ and $\cC^*$ are interchangeable and cannot be distinguished from each other.
Note also that the order matters: $a=4$, and $b=3$, but not vice-versa.
To better understand this example, recall that the polynomial
\begin{equation}
\label{qBn}
\left[n\right]_{q}:=1+q+q^{2}+\cdots+q^{n-1}=\textstyle\frac{1-q^n}{1-q},
\end{equation}
is commonly considered as the $q$-analogue of a (positive) integer~$n$.
Extensively used in such areas as quantum groups, quantum calculus, etc. this
notion goes back to Euler ($\approx$1760) and Gauss ($\approx$1808).

(2)
Our second example is obtained by doubling the previous one,
but the roles of $a$ and $b$ are exchanged: $(a,b,c)=(6,8,10)$.
Our solution to~\eqref{PythEq} in this case is
\begin{eqnarray*}
\A(q) &=& 1+q+q^2+q^3+q^4+q^5=\left[6\right]_q,
\\[4pt]
\B(q) &=& 1+2q+2q^2+2q^3+q^4=(1+q)(1+q^2)^2,
\\[4pt]
\cC(q) &=& 1+q+2q^2+3q^3+2q^4+q^5=(1+2q^2+q^3+q^4)(1+q).
\end{eqnarray*}

We did not find the equation~\eqref{PythEq} in the literature.
Let us mention that more straightforward polynomial generalizations of the Pythagoras equation
such as $\A(q)^2+\B(q)^2=\cC(q)^2$ have been considered by many authors; 
see, e.g.~\cite{DLS,CC}.
This equation is distantly related to the vast subject of sum of squares of polynomials
and Hilbert's seventeenth problem.
However, polynomials satisfying this equation cannot have nice properties 
we will be interested in.

\section{An interesting class of solutions}

We will search for solutions to the equation~\eqref{PythEq}
satisfying the following properties 
which seem quite natural from a combinatorial point of view.

\begin{enumerate}
\item
\label{Con1}
The polynomials $\A,\B,\cC$ have positive integer coefficients;

\item
\label{Con2}
The polynomials $\A$ and $\B$ are self-reciprocal, or ``palindromic'';

\item
\label{Con3}
The polynomials $\A,\B,\cC$ and $\cC^*$ are monic, 
i.e. their leading and lower degree coefficients are equal to~$1$.

\end{enumerate}

The positivity condition \ref{Con1} implies that the triple of integers 
$$
(a,b,c):=(\A(1),\B(1),\cC(1))
$$ 
is a classical Pythagorean triple.
We will therefore say that the triplet of polynomials $(\A,\B,\cC)$
corresponds to this Pythagorean triple
and is its $q$-analogue, or ``quantization''.
This term and notion is extensively used in mathematical physics, but also in combinatorics.
Roughly speaking, quantization means replacing a single quantity by a discrete
(finite) sequence that potentially contain more information.
In our situation, we replace single integers $a,b,c$ by sequences of coefficients
of the polynomials $\A,\B,\cC$, 
and each coefficient of these polynomials must have a meaning.
Another approach to quantization consists in replacing 
commutative algebraic structure by non-commutative, as explored in~\cite{AE}.
Note that two approaches are related, but their comparison is far beyond the scope of this article.

Our main result is the following existence statement.

\begin{thm}
\label{ExtUniq}
For every standard Pythagorean triple $(a,b,c)$
there exists a solution $(\A,\B,\cC)$ to~\eqref{PythEq}
satisfying the conditions \ref{Con1}, \ref{Con2}, and \ref{Con3}
and corresponding to $(a,b,c)$.
\end{thm}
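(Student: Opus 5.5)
We propose to prove Theorem~\ref{ExtUniq} by a \emph{recursive construction along a tree of standard triples}, mirroring Euclide's Theorem. By Euclide's Theorem, standard triples correspond bijectively to rationals $\frac{m}{n}>1$ with $m,n$ coprime. All such rationals are generated from $\frac{2}{1}$ by three moves on the pair $(m,n)$:
$$
(m,n)\mapsto(2m-n,m),\qquad (m,n)\mapsto(2m+n,m),\qquad (m,n)\mapsto(m+2n,n).
$$
This is the Berggren-type ternary tree, read on Euclid parameters. It extends to standard (not only primitive) triples because the parity restriction on $m,n$ is dropped. Equivalently, we may use the Stern--Brocot/Farey tree, generated by $(m,n)\mapsto(m+n,n)$ and $(m,n)\mapsto(m,m+n)$ suitably restricted. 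We will \emph{not} try to quantize the Euclide formula~\eqref{EuclForm} directly. Instead we quantize the linear maps that act on $(a,b,c)$ along the tree, and start from the seed solution $([4]_q,[3]_q,1+2q+q^2+q^3)$ for $(4,3,5)$. If the tree is rooted at $\frac{1}{1}$, the degenerate triple $(2,0,2)$, we start instead from $(1+q,0,1+q)$ and treat it separately.

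\textbf{Step 1 (state space).} We regard the data as a quadruple $(\A,\B,\cC,\cC^*)$ of polynomials, carrying the ``quadratic form''
$$
Q(\A,\B,\cC,\cC^*)=\A^2+q\B^2-\cC\,\cC^*,
$$
together with the degree bookkeeping that makes $\A,\B$ palindromic and $\cC^*$ reciprocal to $\cC$. The evidence for an explicit ansatz is the $(4,3,5)$ example: with the $q$-integer $[2]_q$ playing the role of $m$ and $1$ that of $n$, one has $\cC=[2]_q^2+q^3$ and $\cC^*=1+q\,[2]_q^2\cdot(\ldots)$. This suggests writing the data in terms of a quantized pair $(\mathcal{R},\mathcal{S})$, the $q$-deformed numerator and denominator of $\frac mn$. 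These are obtained from products of the standard $q$-deformed $\SL_2$ generators $\begin{pmatrix}q&1\\0&1\end{pmatrix}$ and $\begin{pmatrix}1&0\\1&q^{-1}\end{pmatrix}$, suitably normalized. We then look for formulas of the shape
$$
\cC=\mathcal{R}\,\mathcal{R}'+q^{k}\mathcal{S}\,\mathcal{S}',
$$
with $\mathcal{R}',\mathcal{S}'$ reciprocal-type companions and $\A,\B$ the $q$-analogues of $2mn$ and $m^2-n^2$. These must be arranged so that $Q$ vanishes by a $q$-version of the Gaussian-integer identity
$$
(m^2+n^2)^2=(2mn)^2+(m^2-n^2)^2 .
$$

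\textbf{Step 2 (invariance).} For each tree move we write an explicit $4\times4$ matrix $T$ with entries in $\Z_{\ge0}[q]$ acting on $(\A,\B,\cC,\cC^*)$, possibly with monomial shifts. It must satisfy three requirements:
\begin{itemize}
\item $Q\circ T=q^{2d}\,Q$ for some shift $d$, so solutions go to solutions;
\item $T$ preserves the palindromic/reciprocal structure;
\item $T$ specializes at $q=1$ to the classical Berggren-type matrix on $(a,b,c)$.
\end{itemize}
Conditions~\ref{Con1} and~\ref{Con3} then propagate by induction. Nonnegative entries give positivity, and monicity follows by tracking the lowest and highest coefficients, which are products of monic polynomials. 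Condition~\ref{Con2} follows from the compatibility of $T$ with the involution $P\mapsto q^{\deg P}P(q^{-1})$. Correspondence with $(a,b,c)$ holds because evaluation at $q=1$ commutes with $T$.

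\textbf{Main obstacle.} The crux is Step 2: finding matrices that are simultaneously positive, compatible with the reciprocal involution, and preserve $Q$ up to a power of $q$. The conjugation $\cC\leftrightarrow\cC^*$ mixes degrees, so degree shifts must be chosen consistently. This may force the moves to depend on $\deg\cC$ and on the parity of $\deg\A$ and $\deg\B$. The second example, $(6,8,10)$, where $a$ and $b$ swap roles, warns that the move $\frac mn\mapsto$ its neighbours may interchange the roles of $\A$ and $\B$.

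My first attempt at Step 2 is the following. Compute by hand the next few solutions, $\frac32\mapsto(12,5,13)$ and $\frac31\mapsto(6,8,10)$, and fit them with the $(\mathcal{R},\mathcal{S})$ ansatz. Read off the transformation, then prove $Q\circ T=q^{2d}Q$ by a direct symbolic identity in $\mathcal{R},\mathcal{S}$. This identity should use the determinant relation $\det=q^{N}$ of the $q$-deformed $\SL_2$ matrix, which replaces $mn'-m'n=1$.
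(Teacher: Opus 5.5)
Your proposal has a genuine gap. The whole argument rests on Step 2, and the matrices $T$ are never produced: the ``first attempt'' is a plan to fit them to data, not a construction. Moreover, Step 2 cannot succeed as you formulate it. If $T$ has entries in $\Z_{\ge0}[q]$ (up to monomial shifts), then $T(1)$ is a nonnegative integer matrix. Since standard triples span $\Q^3$, your requirement that $T$ specialize to the classical move forces $T(1)$, on vectors $(a,b,c,c)$, to be the classical linear map. But the move $(m,n)\mapsto(m+2n,n)$ gives $a'=2(m+2n)n=a-2b+2c$, and $(m,n)\mapsto(2m-n,m)$ gives $a'=2(b+c)-a$. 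The Stern--Brocot move $(m,n)\mapsto(m+n,n)$ likewise gives $a'=a-b+c$. So positivity cannot be propagated by nonnegative matrices. The $q$-analogue of $b=m^2-n^2$ is intrinsically a difference, and its positivity needs a genuine positivity theorem, not bookkeeping. Your tree is also incomplete. The three moves preserve the parity class of $(m,n)$, so from $\frac{2}{1}$ you reach only primitive triples. The triples with $\gcd(a,b,c)=2$ form a second tree rooted at $\frac{3}{1}$, i.e. $(6,8,10)$. Rooting at $\frac{1}{1}$ does not repair this, since $(1,1)\mapsto(1,1),(3,1),(3,1)$.

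The missing idea is that no recursion is needed, because there is a closed $q$-Euclid formula. Let $\cN_{\frac{m}{n}},\cD_{\frac{m}{n}}$ be the numerator and denominator of $\left[\frac{m}{n}\right]_q$, and set $\cC=q\cN_{\frac{m}{n}}^2+\cD_{\frac{m}{n}}^2$; this is the trace of $A_qX_0A_q^T$. The inversion rule $\left[\frac{n}{m}\right]_q=1/\left[\frac{m}{n}\right]_{q^{-1}}$ gives $\cN_{\frac{n}{m}}(q)=q^d\cD_{\frac{m}{n}}(q^{-1})$ and $\cD_{\frac{n}{m}}(q)=q^d\cN_{\frac{m}{n}}(q^{-1})$, hence $\cC^*=q\cN_{\frac{n}{m}}^2+\cD_{\frac{n}{m}}^2$. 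The two-squares identity with $x=q^{1/2}\cN_{\frac{m}{n}}$, $y=\cD_{\frac{m}{n}}$, $z=q^{1/2}\cN_{\frac{n}{m}}$, $t=\cD_{\frac{n}{m}}$ then yields
$\A=q\cN_{\frac{m}{n}}\cN_{\frac{n}{m}}+\cD_{\frac{m}{n}}\cD_{\frac{n}{m}}$ and $\B=\cN_{\frac{m}{n}}\cD_{\frac{n}{m}}-\cD_{\frac{m}{n}}\cN_{\frac{n}{m}}$.

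Your Gaussian-integer intuition points this way, but the key is that $c^2$ is quantized as a product of two \emph{different} norms, $\cC\cC^*$, rather than as a square. Thus $\cC$ is a pure norm, and the mixed products with the reciprocal companion go into $\A$ and $\B$. The remaining conditions then follow directly:
\begin{itemize}
\item palindromicity of $\A,\B$, by rewriting them with the inversion rule;
\item monicity, from monicity of $q$-numerators and $q$-denominators;
\item positivity of $\B$, which is exactly the total positivity theorem for $q$-rationals applied to $\frac{m}{n}>\frac{n}{m}$.
\end{itemize}
That last ingredient is the one your scheme has no substitute for.
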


To illustrate this theorem, let us give an infinite
  series of solutions enumerated by integers:
\begin{eqnarray*}
\A_{\frac{n}{1}}(q)&=&
(1+q^n)\left[n\right]_q,
\\[4pt]
\B_{\frac{n}{1}}(q)&=&
\left[n+1\right]_q\left[n-1\right]_q,
\\[4pt]
\cC_{\frac{n}{1}}(q)&=&
1+q\left[n\right]_q^2,
\end{eqnarray*}
where $\left[n\right]_q$ is the $q$-integer; see~\eqref{qBn}.

Our construction of polynomial Pythagorean triples is based on
the $q$-deformed action of the modular group $\PSL(2,\Z)$ on the rational projective line.
It was used in~\cite{MGOfmsigma} to define the notion of $q$-deformed rational numbers.
For more details, see~\cite{LMGadv} and  a survey~\cite{MGOmn}.
Note that our approach is quite close to that of~\cite{EJMGO}
where $q$-deformations of Markov triples were studied with the help of
the $q$-deformed action of the modular group.

Similarly to the classical case,
we obtain an infinite series of $q$-deformed Pythagorean triples
organized in a form of a binary tree.
Replacing the rational numbers with
$q$-rationals, we obtain a $q$-analogue of the Euclid formula.

We conjecture that our solutions have another remarkable property.
A sequence of real numbers is said to be {\it unimodal}
if it increases (not strictly monotonically) to a maximum, then decreases monotonically.
Unimodal sequences have a single peak and no oscillations.

We wish the following additional property
\begin{enumerate}
\item[$4^*$.]
\label{Con4}
The  sequences of coefficients of the polynomials $\A,\B,\cC$ are unimodal,
\end{enumerate}
but we are unable to guarantee it!
The unimodality property is usually difficult to prove.
Note that this property for $q$-deformed rational numbers
was conjectured in~\cite{MGOfmsigma} and proved in~\cite{OgRa}.

\begin{conj}
The  sequences of coefficients of the polynomials $\A,\B,\cC$
constructed in this article are unimodal.
\end{conj}

Let us mention that the solutions to~\eqref{PythEq} that we construct are
far from being the only existing solutions.
We will show that there are solutions to~\eqref{PythEq} 
satisfying the conditions \ref{Con1}, \ref{Con2}, \ref{Con3}, and the unimodality property
different from ours.
Their classification is a challenging problem.
It would also be interesting to understand what distinguishes 
the class of solutions related to $q$-rationals.

\section{Classical Pythagorean triples}

In this Section, we collect some simple and well-known facts about 
classical Pythagorean triples.
We attach great importance to the action of the modular group.

\subsection{$\SL(2,\Z)$-action}

Every Pythagorean triple $(a,b,c)$ can be identified with a symmetric
$2\times2$ matrix
\begin{equation}
\label{PythaM}
X_{(a,b,c)}=
\begin{pmatrix}
\frac{c+b}{2}&\frac{a}{2}\\[4pt]
\frac{a}{2}&\frac{c-b}{2}
\end{pmatrix}
\end{equation}
of rank~$1$, i.e. $\det(X_{(a,b,c)})=0$.
This is of course equivalent to the Pythagoras equation.
Moreover, the matrix $X_{(a,b,c)}$ has integer coefficients
if and only if $(a,b,c)$ is an integer multiple of a standard triple.
It is important to notice that $c$ is recovered from the above matrix as
the trace:
$$
c=\Tr(X_{(a,b,c)}).
$$
Clearly, $a$ and $b$ are also encoded by the matrix $X_{(a,b,c)}$,
but the trace is more fundamental and has an intrinsic meaning.

The interpretation of Pythagorean triples in the form of a matrix~\eqref{PythaM} 
allows one to define a natural action of
the group $\SL(2,\Z)$ of $2\times2$ unimodular matrices
$$
A=\begin{pmatrix}
\a&\b\\
\g&\d
\end{pmatrix},
\qquad\qquad
\a,\b,\g,\d\in\Z,
\quad
\a\d-\b\g=1
$$
on Pythagorean triples via
\begin{equation}
\label{LFAct}
X_{A(a,b,c)}:=
A\,X_{(a,b,c)}\,A^T,
\end{equation}
where $A^T$ is the matrix transposed to $A$.

The action~\eqref{LFAct} is transitive on the set of standard triples; see~\cite{Tra}.

\subsection{The Pythagorean tree}

The $\SL(2,\Z)$-action allows one to present the whole set of standard Pythagorean triples
in a form of a tree:
$$
\begin{small}
\xymatrix @!0 @R=0.48cm @C=0.48cm
{
&&&&&&&&&&&&&&&&(0,-1,1)\ar@{-}[dd]&&\\
&&&&&&&&&&&&&&&&\\
&&&&&&&&&&&&&&&&(2,0,2)\ar@{-}[dd]&&\\
&&&&&&&&&&&&&&&&\\
&&&&&&&&&&&&&&&&(4,3,5)\ar@{-}[lllllllldd]\ar@{-}[rrrrrrrrdd]&&&&&&&&\\
&&&&&&&&&&&&&&&&\\
&&&&&&&&(12,5,13)\ar@{-}[lllldd]\ar@{-}[rrrrdd]
&&&&&&&&&&&&&&&&(6,8,10)\ar@{-}[lllldd]\ar@{-}[rrrrdd]&&&\\
&&&&&&&&&&&&&&&&&&&&&&&&\\
&&&&(20,21,29)\ar@{-}[lldd]\ar@{-}[rrdd]
&&&&&&&&(30,16,34)\ar@{-}[lldd]\ar@{-}[rrdd]
&&&&&&&&(24,7,25)\ar@{-}[lldd]\ar@{-}[rrdd]
&&&&&&&&(8,15,17)\ar@{-}[lldd]\ar@{-}[rrdd]\\
&&&&&&&&&&&&&&&&&&&&&&&&&&&&\\
&&(28,45,53)\ar@{-}[ldd]\ar@{-}[rdd]
&&&&(70,24,74)\ar@{-}[ldd]\ar@{-}[rdd]
&&&&(80,39,89)\ar@{-}[ldd]\ar@{-}[rdd]
&&&&(48,55,73)\ar@{-}[ldd]\ar@{-}[rdd]
&&&&(42,40,58)\ar@{-}[ldd]\ar@{-}[rdd]
&&&&(56,33,65)\ar@{-}[ldd]\ar@{-}[rdd]
&&&&(40,9,41)\ar@{-}[ldd]\ar@{-}[rdd]
&&&&(10,24,26)\ar@{-}[ldd]\ar@{-}[rdd]\\
&&&&&&&&&&&&&&&&&&&&&&
&&&&&&&&&&&\\
&&
&&&&
&&&&
&&&&
&&&&
&&&&
&&&&
&&&&&&\\
&&&&\ldots&&&&&&&&&&&&\ldots&&&&&&&&&&&&\ldots
}
\end{small}$$
Every right (resp. left) branch of the tree is obtained by the action~\eqref{LFAct} of the
standard generator~$R$ (resp.~$L$)
of $\SL(2,\Z)$, where
$$
R=
\begin{pmatrix}
1&1\\
0&1
\end{pmatrix},
\qquad\qquad
L=
\begin{pmatrix}
1&0\\
1&1
\end{pmatrix}.
$$
It is convenient to start the tree from the degenerate triple $(0,-1,1)$ corresponding to the matrix
\begin{equation}
\label{X0}
X_0=
\begin{pmatrix}
0&0\\
0&1
\end{pmatrix}.
\end{equation}
Consecutive actions of $R$ and $L$ on $(0,-1,1)$ 
lead to the triple $(4,3,5)$, and this is why the first two branches are presented as a vertical stem.

\begin{rem}
Another, perhaps more common, version of the Pythagorean tree that
contains only primitive Pythagorean triples can be obtained using the action
of the congruence subgroup $\Gamma(2)\subset\SL(2,\Z)$.
Our preference however is to keep the whole group $\SL(2,\Z)$,
which results in adding Pythagorean triples with $\gcd(a,b,c)=2$.
\end{rem}

\subsection{Euclide's formula in a matrix form}

In terms of the symmetric matrices~\eqref{PythaM}, the Euclide formula reads
$$
X_{(a,b,c)}=
\begin{pmatrix}
m^2&mn\\
mn&n^2
\end{pmatrix}=
\begin{pmatrix}
m\\
n
\end{pmatrix}
\begin{pmatrix}m, &n\end{pmatrix}.
$$
That is why it is practical to use another notation for $X_{(a,b,c)}$, namely
$X_{\frac{m}{n}}$.
The $\SL(2,\Z)$-action on pairs $(m,n)$ is then simply the linear action on $2$-vectors
$$
A:\begin{pmatrix}
m\\
n
\end{pmatrix}
\mapsto
A\begin{pmatrix}
m\\
n
\end{pmatrix},
$$
while the action on rational numbers is given by fractional-linear transformations
\begin{equation}
\label{LFT}
\begin{pmatrix}
\a&\b\\
\g&\d
\end{pmatrix}\left(x\right)=
\frac{\a x+\b}{\g x+\d}.
\end{equation}

Note that for $x\in\Q$ the result of~\eqref{LFT} can become infinite.
This is why the action~\eqref{LFT} is correctly understood as an action
on the rational projective line $\Q\cup\{\infty\}$,
where $\infty$ is represented by the quotient~$\frac{1}{0}$.
Note also that the center of  $\SL(2,\Z)$ acts trivially, therefore
is is more convenient to think of  $\PSL(2,\Z)$-action,
where $\PSL(2,\Z)$ is the quotient of $\SL(2,\Z)$ by the center: 
$$
\PSL(2,\Z)=\SL(2,\Z)/\Z_2.
$$
When thinking of $\PSL(2,\Z)$ instead of $\SL(2,\Z)$, one should
consider all $2\times2$ matrices up to a scalar multiple.
Note that the group $\PSL(2,\Z)$ is usually called the {\it modular group}.

\subsection{Relation to continued fractions}\label{CFSec}

Every rational number~$\frac{m}{n}>1$ can be written as a finite continued fraction
$$
\frac{m}{n}
\quad=\quad
a_1 + \cfrac{1}{a_2
          + \cfrac{1}{\ddots +\cfrac{1}{a_{k}} } },
          $$
with integer coefficients~$a_i$, such that $a_i\geq1$ for all $i\geq1$.
The standard notation is 
$$
\frac{n}{m}=[a_1,a_2,\ldots,a_{k}].
$$
The above continued fraction expansion is unique
if one chooses an even or odd number of coefficients.
For convenience, we assume that $k$ is odd.

Written in the matrix, or fractional-linear form, the above continued fraction reads
\begin{equation}
\label{MCF}
\frac{m}{n}=
R^{a_1}L^{a_2}R^{a_3}L^{a_4}\cdots{}R^{a_k}
\left(\frac{0}{1}\right).
\end{equation}
We conclude that the matrix~\eqref{PythaM} of a Pythagorean triple
corresponding to $\frac{n}{m}$ is given by
\begin{equation}
\label{Matmn}
X_{\frac{m}{n}}=
A\,X_0\,A^T,
\end{equation}
where $A=R^{a_1}L^{a_2}R^{a_3}L^{a_4}\cdots{}R^{a_k}$ and $X_0$ is as in~\eqref{X0}.

\section{A brief account on $q$-rationals}\label{RatSec}

The notion of $q$-deformed rationals was introduced in~\cite{MGOfmsigma}.
In this section, we briefly remind the definition and some properties
of $q$-rationals that will be useful for what follows.

\subsection{An intrinsic definition}

Given a rational number,~$\frac{m}{n}$, its $q$-deformation is a rational function in~$q$
$$
\left[\frac{m}{n}\right]_q=
\frac{\cN_{\frac{m}{n}}(q)}{\cD_{\frac{m}{n}}(q)},
$$
where the numerator $\cN_{\frac{m}{n}}$ and the denominator $\cD_{\frac{m}{n}}$
are polynomials in~$q$ that both depend on $m$ and $n$.
Assume that we know the $q$-analogue
of one point, for instance, 
$$
[0]_q:=0.
$$

More precisely, the $q$-rationals are characterized by the following two recurrence formulas
\begin{equation}
\label{RecRat}
\left[\frac{m}{n}+1\right]_q=
q\left[\frac{m}{n}\right]_q+1,
\qquad\qquad
\left[-\frac{n}{m}\right]_q=
-\frac{1}{q\left[\frac{m}{n}\right]_q}.
\end{equation}
Polynomials $\cN_{\frac{m}{n}}$ and $\cD_{\frac{m}{n}}$  
possess many interesting properties; see~\cite{MGOmn} and references cited therein.
In particular, they are monic  polynomials with positive integer coefficients.
An important property of unimodality was proved in~\cite{OgRa}.

\subsection{The $q$-deformed $\PSL(2,\Z)$-action}

The action is determined by two generators of $\PSL(2,\Z)$ represented by the matrices
\begin{equation}
\label{Gens}
R_{q}=
\begin{pmatrix}
q&1\\
0&1
\end{pmatrix},
\qquad\qquad
L_{q}=
\begin{pmatrix}
q&0\\
q&1
\end{pmatrix}
\end{equation}
considered up to a scalar multiple by a power of~$q$.
The map $\frac{m}{n}\mapsto\left[\frac{m}{n}\right]_q$ is characterized by the property
that it intertwines the $\PSL(2,\Z)$-action~\eqref{LFT} and the $\PSL(2,\Z)$-action
with the generators $R_q$ and $S_q$ as in~\eqref{Gens}.

\subsection{An explicit formula}

Given a rational number $\frac{m}{n}$ whose continued fraction expansion is
$\frac{m}{n}=[a_1,a_2,\ldots,a_k]$, where $k$ is odd.
A simple way to calculate a $q$-rational is to replace $R$ and $L$ in~\eqref{MCF} by
$R_q$ and $L_q$.
$$
\left[\frac{m}{n}\right]_q
=
R_q^{a_1}L_q^{a_2}R_q^{a_3}L_q^{a_4}\cdots{}R_q^{a_k}
\left(\frac{0}{1}\right).
$$
This can be viewed as one of several equivalent definitions:

\begin{ex}
The first interesting examples
$$
\left[\frac{5}{2}\right]_{q}=
\frac{1+2q+q^{2}+q^{3}}{1+q},
\qquad\qquad
\left[\frac{5}{3}\right]_{q}=
\frac{1+q+2q^{2}+q^{3}}{1+q+q^{2}}
$$
illustrates the fact that the numerator (and denominator) of the $q$-rational
$\frac{m}{n}$ depend both on~$m$ and~$n$.
Observe that the polynomials in  the numerators
of these $q$-rationals are precisely the polynomials $\cC$ and~$\cC^*$ 
from our first example.

\end{ex}

\subsection{The total positivity property}\label{TPSec}

An important property of the $q$-rationals is the fact that the $q$-deformation
``preserves the order'' of rationals 
in the following sense (see~\cite{MGOfmsigma}, Theorem~2).
Suppose we have two rationals~$\frac{m}{n}>\frac{m'}{n'}$, 
then the polynomial 
$$
\mathcal{P}_{\frac{n}{m},\frac{n'}{m'}}(q)=
\cN_{\frac{m}{n}}(q)\cD_{\frac{m'}{n'}}(q)-\cD_{\frac{m}{n}}(q)\cN_{\frac{m'}{n'}}(q)
$$
has positive integer coefficients.
This statement is topological in nature,
since every ordered set is endowed with a natural topology.
It was essential for extension of the notion of $q$-rationals to irrationals; 
see~\cite{MGOexp}.
This property will be important for us.

\subsection{The inverse $q$-rational}

The following property of $q$-rationals 
can be deduced from~\eqref{RecRat}; see~\cite{LMGadv}.
It allows one to calculate 
$q$-analogs for inverse rationals.
\begin{equation}
\label{InvqRat}
\left[\frac{n}{m}\right]_q=
\frac{1}{\left[\frac{m}{n}\right]_{q^{-1}}}.
\end{equation}
Using this formula it is easy to calculate explicitly the numerator and denominator.

\begin{cor}
\label{NDProp}
For every $\frac{m}{n}\in\Q$, one has
\begin{equation}
\label{NumDenExp}
\cN_{\frac{n}{m}}(q)=q^d\cD_{\frac{m}{n}}(q^{-1}),
\qquad\qquad
\cD_{\frac{n}{m}}(q)=q^d\cN_{\frac{m}{n}}(q^{-1}),
\end{equation}
where $d=\max(\deg(\cN_{\frac{m}{n}}),\deg(\cD_{\frac{m}{n}}))$.
\end{cor}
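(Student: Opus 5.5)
Starting from~\eqref{InvqRat}, I will write $\left[\frac{m}{n}\right]_{q^{-1}}=\cN_{\frac{m}{n}}(q^{-1})/\cD_{\frac{m}{n}}(q^{-1})$. Inverting and using~\eqref{InvqRat} gives
$$
\frac{\cN_{\frac{n}{m}}(q)}{\cD_{\frac{n}{m}}(q)}=\frac{\cD_{\frac{m}{n}}(q^{-1})}{\cN_{\frac{m}{n}}(q^{-1})}
=\frac{q^d\cD_{\frac{m}{n}}(q^{-1})}{q^d\cN_{\frac{m}{n}}(q^{-1})},
$$
where the factor $q^d$, with $d$ as in the statement, is chosen so that both $q^d\cD_{\frac{m}{n}}(q^{-1})$ and $q^d\cN_{\frac{m}{n}}(q^{-1})$ are genuine polynomials in~$q$. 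The remaining work is to show that this quotient of polynomials is exactly the reduced form $\cN_{\frac{n}{m}}/\cD_{\frac{n}{m}}$, with no common factor and no leftover power of~$q$ or constant.

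First I check coprimality. By definition, $\cN_{\frac{m}{n}}$ and $\cD_{\frac{m}{n}}$ come from the first column of a product of the matrices $R_q$ and $L_q$. Each of these matrices has determinant~$q$, so the product has determinant~$\pm q^{s}$. A common polynomial factor of numerator and denominator would therefore divide $q^s$ times a polynomial entry, so the only possible common factor is a power of~$q$. Since both polynomials are monic in the sense of condition~\ref{Con3}, that is, have constant term~$1$, no power of $q$ divides both of them, and the fraction is reduced.

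Next I check the reversed polynomials. Reversal preserves coprimality, apart from powers of $q$, and at least one of $q^d\cD_{\frac{m}{n}}(q^{-1})$, $q^d\cN_{\frac{m}{n}}(q^{-1})$ has nonzero constant term, namely the one of degree~$d$. Hence their gcd is a common factor of the reversed polynomials and is coprime to $q$, which forces it to be~$1$.

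Finally I fix the normalization. Numerator and denominator of $q$-rationals are normalized to have constant term $1$, or at least positive leading and constant coefficients equal to~$1$. Both expressions have leading coefficient equal to the constant term of the original polynomials, which is $1$, and constant term equal to the leading coefficient, again $1$. So they coincide with $\cN_{\frac{n}{m}}$ and $\cD_{\frac{n}{m}}$, with no scalar ambiguity.

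The main obstacle is this last normalization point: ruling out an extra power of $q$. The step I need is that the polynomial of smaller degree also has constant term~$1$ after multiplication by $q^d$, so that the numerator and denominator produced for $\frac{n}{m}$ share no factor of~$q$. This is exactly why $d$ must be the maximum of the two degrees rather than either one individually.
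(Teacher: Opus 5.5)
Your route is the paper's, whose proof is just ``this readily follows from~\eqref{InvqRat}''. Your third paragraph already gives the right reason the new fraction is reduced: reversal preserves coprimality up to powers of $q$, and the reversal of whichever of $\cN_{\frac{m}{n}},\cD_{\frac{m}{n}}$ has degree exactly $d$ has nonzero constant term. The problem is the normalization step, which you single out as the crux, because it rests on a false claim. The constant term of $q^dP(q^{-1})$ is the coefficient of $q^d$ in $P$, so it \emph{vanishes} for the polynomial of smaller degree. For $\frac{m}{n}=\frac{3}{2}$ one has $d=2$ and $q^2\cD_{\frac{3}{2}}(q^{-1})=q+q^2$, which is exactly $\cN_{\frac{2}{3}}$ as listed in the paper. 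So the step you say you need (``the polynomial of smaller degree also has constant term~$1$ after multiplication by $q^d$'') is false. So is the normalization rule you invoke: numerators of $q$-rationals in $(0,1)$ have zero constant term.

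The same false premise appears in your coprimality paragraph, where you assert that both $\cN_{\frac{m}{n}}$ and $\cD_{\frac{m}{n}}$ have constant term~$1$; this fails for $\frac{m}{n}=\frac{2}{3}$. There you only need the denominator to have constant term~$1$, and this holds for every positive rational. Indeed, $\cD_{\frac{m}{n}}$ is the lower entry of the \emph{second} column of $A_q$ (not the first), and applying $R_q$ or $L_q$ to a column vector preserves constant term~$1$ in the lower entry.

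To repair the normalization, use that a reduced representation of a rational function is unique up to a common nonzero scalar~$c$. Thus $q^d\cD_{\frac{m}{n}}(q^{-1})=c\,\cN_{\frac{n}{m}}(q)$ and $q^d\cN_{\frac{m}{n}}(q^{-1})=c\,\cD_{\frac{n}{m}}(q)$. Then compare \emph{lowest nonzero} coefficients. For $q^dP(q^{-1})$ this is the leading coefficient of $P$, which is $1$ for numerators and denominators of positive $q$-rationals. The lowest nonzero coefficients of $\cN_{\frac{n}{m}}$ and $\cD_{\frac{n}{m}}$ are also $1$, hence $c=1$. Equivalently, evaluate at $q=1$, where both pairs give $(n,m)$.

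Finally, for negative $\frac{m}{n}$, e.g.\ $[-1]_q=-\frac{1}{q}$, neither positivity nor your constant-term normalization is available. There the identity only holds with a sign convention tied to the written fraction, such as $\cN_{\frac{m}{n}}(1)=m$, $\cD_{\frac{m}{n}}(1)=n$. The paper only uses the case $\frac{m}{n}\geq1$.
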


\begin{proof}
This readily follows from~\eqref{InvqRat}.
\end{proof}

\begin{rem}
Another formula was obtained in~\cite{Per2}:
$$
\left[\frac{n}{m}\right]_q=
\frac{\left(q-1\right)\left[\frac{m}{n}\right]_q+1}{q\left[\frac{m}{n}\right]_q+1-q}.
$$
One of the advantages of this formula is that it is not necessary to invert the parameter~$q$
and can be useful in many different situations.
\end{rem}

\section{A construction of $q$-Pythagorean triples}\label{SecTwo}

In this section, we describe a $q$-analogue for each
standard Pythagorean triple.
We show that our solutions satisfy the conditions~\ref{Con1}--\ref{Con3},
thus proving Theorem~\ref{ExtUniq}.

\subsection{The main definition}\label{DefSec}

Given a rational number~$\frac{m}{n}>1$ and its continued fraction
expansion $\frac{m}{n}=[a_1,a_2,\ldots,a_k]$, where $k$ is odd,
we define a $q$-analogue of the matrix~\eqref{PythaM} and~\eqref{Matmn}
by the following formula:
\begin{equation}
\label{Xq}
X_{\frac{m}{n}}(q):=
A_q\,X_0\,A_q^T,
\end{equation}
where 
$$
A_q:=R_q^{a_1}L_q^{a_2}R_q^{a_3}L_q^{a_4}\cdots{}R_q^{a_k}
\qquad\hbox{and}\qquad
A_q^T:=L_q^{a_k}R_q^{a_{k-1}}L_q^{a_{k-2}}R_q^{a_4}\cdots{}L_q^{a_1}.
$$

\begin{rem}
Note that $A_q^T$ is not exactly the transposition of~$A_q$.
Indeed, the notion of transposition in the $q$-deformed case has the following form
$$
\begin{pmatrix}
\a(q)&\b(q)\\[4pt]
\g(q)&\d(q)
\end{pmatrix}^T
:=\begin{pmatrix}
\a(q)&q^{-1}\g(q)\\[4pt]
q\b(q)&\d(q)
\end{pmatrix}.
$$
For details; see~\cite{XY}.
\end{rem}

\subsection{The trace of the matrix $X_{\frac{m}{n}}(q)$}\label{qEuclSec}

We have the following statement.

\begin{prop}
\label{TraceProp}
The trace of the matrix~\eqref{Xq} has the following expression in terms of the
corresponding $q$-rational
\begin{equation}
\label{TraceForm}
\Tr\left(X_{\frac{m}{n}}(q)\right)=
q\cN_{\frac{m}{n}}(q)^2+\cD_{\frac{m}{n}}(q)^2,
\end{equation}
where $\cN_{\frac{m}{n}}(q)$ and $\cD_{\frac{m}{n}}(q)$ are the numerator and denominator 
of the $q$-deformedrational~$\left[\frac{m}{n}\right]_q$.
\end{prop}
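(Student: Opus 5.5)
The plan is to compute $A_q\,X_0\,A_q^T$ explicitly in terms of the entries of $A_q$. The starting observation is that the product $A_q^T=L_q^{a_k}R_q^{a_{k-1}}\cdots L_q^{a_1}$ really is the $q$-transpose of $A_q$ in the sense of the Remark in Section~\ref{DefSec}. Put $D=\mathrm{diag}(1,q)$. Then the $q$-transpose of a matrix $M$ is $D\,M^t\,D^{-1}$, where $M^t$ is the ordinary transpose: the $(1,2)$ entry becomes $q^{-1}\g$ and the $(2,1)$ entry becomes $q\b$, as required. Hence $q$-transposition is an anti-automorphism of the matrix algebra.

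A direct check on the generators~\eqref{Gens} shows that $R_q$ and $L_q$ are exchanged by this operation. Indeed,
$$
\begin{pmatrix} q&1\\ 0&1\end{pmatrix}\mapsto\begin{pmatrix} q&0\\ q&1\end{pmatrix},
\qquad
\begin{pmatrix} q&0\\ q&1\end{pmatrix}\mapsto\begin{pmatrix} q&1\\ 0&1\end{pmatrix}.
$$
By anti-multiplicativity, the word $A_q^T$ of Section~\ref{DefSec} is therefore exactly $D\,A_q^t\,D^{-1}$. Writing $A_q=\begin{pmatrix}\a&\b\\ \g&\d\end{pmatrix}$, this gives $A_q^T=\begin{pmatrix}\a&q^{-1}\g\\ q\b&\d\end{pmatrix}$.

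Next I compute the product. First, $A_qX_0=\begin{pmatrix}0&\b\\0&\d\end{pmatrix}$. Multiplying on the right by $A_q^T$ gives
$$
X_{\frac{m}{n}}(q)=\begin{pmatrix} q\b^2&\b\d\\ q\b\d&\d^2\end{pmatrix},
$$
so $\Tr X_{\frac{m}{n}}(q)=q\b^2+\d^2$. This is the $q$-analogue of the rank-one factorization in the Euclide formula. It remains to identify $\b=\cN_{\frac{m}{n}}(q)$ and $\d=\cD_{\frac{m}{n}}(q)$. By the explicit formula of Section~\ref{RatSec}, $\left[\frac{m}{n}\right]_q=A_q\left(\frac01\right)=\frac{\b}{\d}$, so the two expressions agree as rational functions. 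The only issue is normalization, because the matrices are defined only up to powers of $q$.

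The main obstacle is this normalization step: I must show that the fraction $\b/\d$ is already reduced and correctly normalized, i.e.\ that $\b$ and $\d$ are coprime polynomials with $\d(0)=1$ and $\b(0)=1$ (the latter requires $a_1\ge1$). For coprimality, $\det A_q=q^{\sum a_i}$, so any common factor of $\b$ and $\d$ is a power of~$q$. Thus it suffices to track constant terms of the second column $A_q\binom{0}{1}$. I would do this by induction, applying the generators from right to left. We have $R_q\binom{x}{y}=\binom{qx+y}{y}$ and $L_q\binom{x}{y}=\binom{qx}{qx+y}$. Starting from $\binom01$, the second coordinate always keeps constant term $1$. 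The first coordinate acquires constant term $1$ after the first $R_q$ and keeps it under $R_q$. Under $L_q$, however, it becomes $qx$ and loses its constant term.

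So the first coordinate needs a little more care. I expect to use that the expansion ends with the block $R_q^{a_1}$, $a_1\ge1$, on the left, which restores the constant term $1$. Together with positivity of the coefficients, this shows that $\b$ and $\d$ are precisely the monic numerator and denominator $\cN_{\frac{m}{n}}$, $\cD_{\frac{m}{n}}$ of Section~\ref{RatSec}. That yields~\eqref{TraceForm}.
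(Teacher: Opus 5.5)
Your proof is correct and follows the same route as the paper: you identify the second column of $A_q$ as $(\cN_{\frac{m}{n}},\cD_{\frac{m}{n}})$ and the second row of $A_q^T$ as $(q\cN_{\frac{m}{n}},\cD_{\frac{m}{n}})$, then multiply out $A_q X_0 A_q^T$. The paper only asserts these two facts, while you justify both correctly: the word $A_q^T$ equals $D A_q^t D^{-1}$ with $D=\mathrm{diag}(1,q)$ because this anti-automorphism swaps $R_q$ and $L_q$, and the fraction $\b/\d$ is already reduced and normalized by $\det A_q=q^{\sum a_i}$ plus the constant-term bookkeeping (which needs no extra care, since the final $R_q$ already gives $\b(0)=\d(0)=1$).
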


\begin{proof}
The matrixces $A_q$ and $A_q^T$ have the following explicit form
$$
A_q=
\begin{pmatrix}
*&\cN_{\frac{m}{n}}(q)\\[4pt]
*&\cD_{\frac{m}{n}}(q)
\end{pmatrix},
\qquad\qquad
A_q^T=
\begin{pmatrix}
*&*\\[4pt]
q\cN_{\frac{m}{n}}(q)&\cD_{\frac{m}{n}}(q)
\end{pmatrix}.
$$
Note that the first column of $A_q$ and the first row $A_q^T$ 
are not relevant, but can be calculated from the continued fraction
of~$\frac{m}{n}$; see~\cite{MGOfmsigma}.
It follows from~\eqref{Xq} that
\begin{eqnarray*}
X_{\frac{m}{n}}(q) &=&
\begin{pmatrix}
*&\cN_{\frac{m}{n}}(q)\\[4pt]
*&\cD_{\frac{m}{n}}(q)
\end{pmatrix}
\begin{pmatrix}
0&\;0\\[4pt]
0&\;1
\end{pmatrix}
\begin{pmatrix}
*&*\\[4pt]
q\cN_{\frac{m}{n}}(q)&\cD_{\frac{m}{n}}(q)
\end{pmatrix}\\[6pt]
&=&
\begin{pmatrix}
q\cN_{\frac{m}{n}}(q)^2&*\\[4pt]
*&\cD_{\frac{m}{n}}(q)^2
\end{pmatrix},
\end{eqnarray*}
and hence the result.
\end{proof}

\begin{defn}
For every rational $\frac{m}{n}$, we thus naturally associate the polynomial
$$
\cC_{\frac{m}{n}}(q):=q\cN_{\frac{m}{n}}(q)^2+\cD_{\frac{m}{n}}(q)^2.
$$
\end{defn}

Our goal is to show that the product of the polynomial $\cC_{\frac{m}{n}}(q)$ 
and its reciprocal $\cC^*_{\frac{m}{n}}(q)$ (see~\eqref{RecipEq}) satisfies the 
$q$-Pythagoras equation~\eqref{PythEq}.

\subsection{Solutions to the $q$-Pythagoras equation}\label{qPitSec}

We have the following result.

\begin{thm}
\label{CalcThm}
The polynomials
\begin{eqnarray}
\label{qEucl1}
\A_{\frac{m}{n}}(q)&=&
q\cN_{\frac{m}{n}}(q)\cN_{\frac{n}{m}}(q)+\cD_{\frac{m}{n}}(q)\cD_{\frac{n}{m}}(q),
\\[4pt]
\label{qEucl2}
\B_{\frac{m}{n}}(q)&=&
\cN_{\frac{m}{n}}(q)\cD_{\frac{n}{m}}(q)-\cD_{\frac{m}{n}}(q)\cN_{\frac{n}{m}}(q),
\\[4pt]
\label{qEucl3}
\cC_{\frac{m}{n}}(q)&=&
q\cN_{\frac{m}{n}}(q)^2+\cD_{\frac{m}{n}}(q)^2.
\end{eqnarray}
are solutions to~\eqref{PythEq}.
\end{thm}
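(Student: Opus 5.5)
The plan is to reduce \eqref{PythEq} to a polynomial identity in four quantities and then use Corollary~\ref{NDProp} to identify $\cC^*$. Write $N=\cN_{\frac{m}{n}}$, $D=\cD_{\frac{m}{n}}$, $N'=\cN_{\frac{n}{m}}$ and $D'=\cD_{\frac{n}{m}}$. Then
\[
\A=qNN'+DD',\qquad \B=ND'-DN',\qquad \cC=qN^2+D^2 .
\]
Expand $\A^2+q\B^2$ directly. The cross terms $2qNN'DD'$ and $-2qNN'DD'$ cancel, leaving
\[
\A^2+q\B^2=q^2N^2N'^2+D^2D'^2+qN^2D'^2+qD^2N'^2=(qN^2+D^2)(qN'^2+D'^2).
\]
This is the $q$-analogue of the Brahmagupta--Fibonacci identity $(x^2+y^2)(u^2+v^2)=(xu+yv)^2+(xv-yu)^2$, with the weight $q$ placed on the $N$-variables. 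Thus
\[
\A^2+q\B^2=\cC_{\frac{m}{n}}\,\cC_{\frac{n}{m}},
\]
and the whole problem reduces to showing $\cC_{\frac{n}{m}}=\cC_{\frac{m}{n}}^*$.

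For this identification I would use Corollary~\ref{NDProp}: $N'(q)=q^dD(q^{-1})$ and $D'(q)=q^dN(q^{-1})$, where $d=\max(\deg N,\deg D)$. These give
\[
qN'^2+D'^2=q^{2d}\bigl(qD(q^{-1})^2+N(q^{-1})^2\bigr)=q^{2d+1}\bigl(q^{-1}N(q^{-1})^2+D(q^{-1})^2\bigr)=q^{2d+1}\cC_{\frac{m}{n}}(q^{-1}).
\]
It then remains to check that $\deg\cC_{\frac{m}{n}}=2d+1$, which is where I expect the only real care to be needed. For $\frac{m}{n}>1$ one has $\deg N\ge\deg D$, so $d=\deg N$. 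Since $N$ is monic with positive coefficients, $qN^2$ has degree $2d+1$ and leading coefficient $1$. Meanwhile $D^2$ has degree at most $2d<2d+1$. Hence $\deg\cC=2d+1$ and $q^{2d+1}\cC(q^{-1})=\cC^*(q)$ by \eqref{RecipEq}.

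The fact $\deg N\ge\deg D$ for $\frac{m}{n}>1$ can be checked from the matrix product $R_q^{a_1}\cdots R_q^{a_k}$: the factor $R_q^{a_1}$ with $a_1\ge1$ raises the numerator degree. Alternatively, it follows from the recurrence $\left[x+1\right]_q=q[x]_q+1$, which gives $N_{x+1}=qN_x+D_x$ and $D_{x+1}=D_x$.

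Combining the two steps gives $\A^2+q\B^2=\cC\,\cC^*$, which is exactly \eqref{PythEq}.
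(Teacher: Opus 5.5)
Your proof is correct and follows essentially the same route as the paper's: Corollary~\ref{NDProp} together with $\deg\cC_{\frac{m}{n}}=2d+1$ identifies $\cC^*_{\frac{m}{n}}$ with $q\cN_{\frac{n}{m}}^2+\cD_{\frac{n}{m}}^2$, and the $q$-weighted Brahmagupta--Fibonacci identity then gives the equation; you verify that identity by direct expansion, whereas the paper substitutes $q^{\frac12}$ into the classical identity. Your check that $\deg\cN_{\frac{m}{n}}\ge\deg\cD_{\frac{m}{n}}$ for $\frac{m}{n}>1$ fills in a step the paper only asserts; it uses $[x+1]_q=q[x]_q+1$ and positivity of coefficients, which rules out cancellation of leading terms in $q\cN_x+\cD_x$.
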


\begin{proof}
From the definition of the reciprocal polynomial,
\begin{eqnarray*}
\cC^*_{\frac{m}{n}}(q) &=&
q^{\deg(\cC)}\cC_{\frac{m}{n}}(q^{-1})\\
&=&
q^{\deg(\cC)}\left(q^{-1}\cN_{\frac{m}{n}}(q^{-1})^2+\cD_{\frac{m}{n}}(q^{-1})^2\right).
\end{eqnarray*}
The degree of $\cC$ is given by
$$
\deg(\cC)=2d+1,
$$
where $d=\max(\deg(\cN_{\frac{m}{n}}),\deg(\cD_{\frac{m}{n}}))$, as in~\eqref{NumDenExp}.
Using~\eqref{NumDenExp}, we thus obtain
$$
\cC^*_{\frac{m}{n}}(q)=
\cD_{\frac{n}{m}}(q)^2+q\cN_{\frac{n}{m}}(q)^2.
$$
It follows that
$$
\cC_{\frac{m}{n}}(q)\cC^*_{\frac{m}{n}}(q)=
\left(q\cN_{\frac{m}{n}}(q)^2+\cD_{\frac{m}{n}}(q)^2\right)
\left(q\cN_{\frac{n}{m}}(q)^2+\cD_{\frac{n}{m}}(q)^2\right).
$$
Finally, using the classical Brahmagupta-Fibonacci square identity
$$
(x^2+y^2)(z^2+t^2)=(xz+yt)^2+(xt-yz)^2,
$$ 
for $x=q^\half\cN_{\frac{m}{n}}(q), 
y=\cD_{\frac{m}{n}}(q),
z=q^\half\cN_{\frac{n}{m}}$, and $t=\cD_{\frac{n}{m}}(q)$,
one obtains
\begin{eqnarray*}
\cC_{\frac{m}{n}}(q)\cC^*_{\frac{m}{n}}(q) &=&
\left(q\cN_{\frac{m}{n}}(q)\cN_{\frac{n}{m}}(q)+\cD_{\frac{m}{n}}(q)\cD_{\frac{n}{m}}(q)\right)^2\\
&&+q\left(\cN_{\frac{m}{n}}(q)\cD_{\frac{n}{m}}(q)-\cD_{\frac{m}{n}}(q)\cN_{\frac{n}{m}}(q)\right)^2,
\end{eqnarray*}
as desired.
\end{proof}

\begin{rem}
The expressions \eqref{qEucl1}--\eqref{qEucl3} constitute a $q$-analogue of the Euclide
formula~\eqref{EuclForm}.
\end{rem}

\subsection{Properties of the polynomials $\A_{\frac{m}{n}},\B_{\frac{m}{n}}$ and $\cC_{\frac{m}{n}}$}\label{PropSec}

Let us give some simple properties of the polynomials 
$\A_{\frac{m}{n}},\B_{\frac{m}{n}}$ and $\cC_{\frac{m}{n}}$ defined by
\eqref{qEucl1}--\eqref{qEucl3}.

\begin{prop}
\label{PropProp}
If $\frac{m}{n}\geq1$, then

(i)
The polynomials $\A_{\frac{m}{n}}$ and $\B_{\frac{m}{n}}$  are self-reciprocal.

(ii)
All three polynomials $\A_{\frac{m}{n}},\B_{\frac{m}{n}}$ and $\cC_{\frac{m}{n}}$
are monic and have positive coefficients.

\end{prop}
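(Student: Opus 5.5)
For (i), I will use Corollary~\ref{NDProp} to rewrite $\A_{\frac{m}{n}}$ and $\B_{\frac{m}{n}}$ in terms of $\cN:=\cN_{\frac{m}{n}}$ and $\cD:=\cD_{\frac{m}{n}}$ only. Since $\frac{m}{n}\geq1$, I expect $\deg\cN=d\geq\deg\cD$, where $d$ is as in~\eqref{NumDenExp}. Writing $\cN^*(q)=q^d\cN(q^{-1})$ and $\cD^*(q)=q^d\cD(q^{-1})$, the corollary gives $\cN_{\frac{n}{m}}=\cD^*$ and $\cD_{\frac{n}{m}}=\cN^*$. Hence
$$
\A_{\frac{m}{n}}=q\,\cN\cD^*+\cD\cN^*,
\qquad
\B_{\frac{m}{n}}=\cN\cN^*-\cD\cD^*.
$$
Substituting $q\mapsto q^{-1}$ and multiplying by $q^{2d+1}$ sends $q\cN\cD^*$ to $\cN^*\cD$ and vice versa, so $\A$ is palindromic of formal degree $2d+1$. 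Multiplying by $q^{2d}$ fixes each of $\cN\cN^*$ and $\cD\cD^*$, so $\B$ is palindromic of formal degree $2d$. I still have to check that these formal degrees are the true degrees, and that the lowest-order term of $\A$ (and of $\B$) sits at $q^0$; otherwise "self-reciprocal" in the sense of~\eqref{RecipEq} could fail. This will follow from the monicity analysis in (ii).

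For (ii), positivity of $\A$ and $\cC$ is immediate, because they are sums of products of polynomials with positive coefficients (\S\ref{RatSec}). For $\B$, I will invoke the total positivity property of \S\ref{TPSec}. With $\frac{m}{n}\geq1\geq\frac{n}{m}$,
$$
\B_{\frac{m}{n}}=\cN_{\frac{m}{n}}\cD_{\frac{n}{m}}-\cD_{\frac{m}{n}}\cN_{\frac{n}{m}}
$$
is exactly the polynomial $\mathcal P$ attached to the ordered pair $\frac{m}{n}>\frac{n}{m}$, so it has positive coefficients. The degenerate case $m=n=1$ gives $\B=0$ and corresponds to the trivial triple; I will treat it separately.

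For monicity I will use that $\cN$ and $\cD$ are monic with positive coefficients. I also expect $\cN(0)=\cD(0)=1$ and, for $\frac{m}{n}>1$, $\deg\cD<\deg\cN$; I would verify both from the matrix product $R_q^{a_1}\cdots R_q^{a_k}$ by induction on the continued fraction. Given these facts:
\begin{itemize}
\item $\cC=q\cN^2+\cD^2$ has constant term $1$ and top term $q^{2d+1}$ with coefficient $1$, since $\deg\cD^2\leq 2d<2d+1$.
\item $\A$ has constant term $\cD(0)\cN^*(0)=1$, because the leading coefficient of $\cN$ is $1$. Its top coefficient then equals $1$ by the palindromicity proved in (i).
\item For $\B$, the constant term is $\cN(0)\cN^*(0)-\cD(0)\cD^*(0)=1-\cD^*(0)$. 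Here $\cD^*(0)$ is the coefficient of $q^d$ in $\cD$, which is $0$ when $\deg\cD<d$. So the constant term is $1$, and palindromicity gives the top coefficient.
\end{itemize}
For $\cC^*$, its constant term is the top coefficient of $\cC$ and its top coefficient is the constant term of $\cC$, so $\cC^*$ is monic as well.

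The main obstacle is establishing the degree inequality $\deg\cD_{\frac{m}{n}}<\deg\cN_{\frac{m}{n}}$ for $\frac{m}{n}>1$ (with equality only for $1$), together with the normalization $\cN(0)=\cD(0)=1$. I would prove both by induction: applying $R_q$ to $\frac{\cN}{\cD}$ gives $\frac{q\cN+\cD}{\cD}$, which raises $\deg\cN$. Applying $L_q$ gives $\frac{q\cN}{q\cN+\cD}$ after clearing the common factor of $q$. I will track leading terms and constant terms through these two steps, using the convention that $k$ is odd and that $a_1\geq1$ for $\frac{m}{n}>1$, so the last generator applied on the left is $R_q$.
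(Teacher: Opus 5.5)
Your proposal is correct and follows the paper's route. You rewrite $\A_{\frac{m}{n}},\B_{\frac{m}{n}}$ via Corollary~\ref{NDProp} to get palindromicity, take positivity of $\A,\cC$ from that of $q$-rational numerators and denominators and positivity of $\B$ from total positivity with $\frac{m}{n}>\frac{n}{m}$, and read off monicity from constant terms together with $\deg\cD_{\frac{m}{n}}<\deg\cN_{\frac{m}{n}}$; you are more careful than the paper about that bookkeeping. Note, though, that at $\frac{m}{n}=1$ one has $\B=0$, so that case cannot be ``treated'' and must simply be excluded, as the paper's proof also silently does by assuming $\deg\cN_{\frac{m}{n}}>\deg\cD_{\frac{m}{n}}$.
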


\begin{proof}
Part (i).
By~\eqref{NumDenExp}, another expression for 
the polynomials $\A_{\frac{m}{n}}$ and $\B_{\frac{m}{n}}$ is as follows
\begin{eqnarray*}
\A_{\frac{m}{n}}(q) &=&
q^d\left(
q\cN_{\frac{m}{n}}(q)\cD_{\frac{m}{n}}(q^{-1})
+\cD_{\frac{m}{n}}(q)\cN_{\frac{m}{n}}(q^{-1})
\right),
\\[6pt]
\B_{\frac{m}{n}}(q)&=&
q^d\left(\cN_{\frac{m}{n}}(q)\cN_{\frac{m}{n}}(q^{-1})
-\cD_{\frac{m}{n}}(q)\cD_{\frac{m}{n}}(q^{-1})\right),
\end{eqnarray*}
where $d=\deg(\cN_{\frac{m}{n}})>\deg(\cD_{\frac{m}{n}})$.
The above expression for $\B_{\frac{m}{n}}$ is manifestly self-reciprocal.
For $\A_{\frac{m}{n}}$ one has $\deg\left(\A_{\frac{m}{n}}\right)=2d+1$
and therefore
$$
\begin{array}{rcl}
\A^*_{\frac{m}{n}}(q) &=&
q^{\deg\left(\A_{\frac{m}{n}}\right)}\A_{\frac{m}{n}}(q^{-1})\\[6pt]
&=&
q^{d}\cN_{\frac{m}{n}}(q^{-1})\cD_{\frac{m}{n}}(q)
+q^{d+1}\cD_{\frac{m}{n}}(q^{-1})\cN_{\frac{m}{n}}(q)\\[6pt]
&=&\A_{\frac{m}{n}}(q).
\end{array}
$$

Part (ii).
For $\frac{m}{n}\geq1$ polynomials 
$\cD_{\frac{m}{n}}$ and $\cD_{\frac{n}{m}}$ are monic, and so is $\A_{\frac{m}{n}}$.
The polynomial $\cN_{\frac{m}{n}}$ is also monic, 
while the lower coefficient of $\cN_{\frac{n}{m}}$ vanishes.
Hence $\B_{\frac{m}{n}}$ is monic as well.

The polynomials $\A_{\frac{m}{n}}$ and $\cC_{\frac{m}{n}}$ have positive coefficients since
all numerators and denominators of the positive $q$-rationals have positive coefficients; 
see~\cite{MGOfmsigma}.
The fact that $\B_{\frac{m}{n}}$ has positive coefficients is a corollary of
Theorem~2 of~\cite{MGOfmsigma}.
\end{proof}

In all the examples we explored with numerical experimentation,
the formulas~\eqref{qEucl1}--\eqref{qEucl3} produce unimodal polynomials.
However, we were not able to prove the unimodality in general.

\subsection{Further examples}

To conclude this article, here are a few more examples.
We will consistently indicate whether the polynomials are factorable. 
A surprise awaits the persistent reader who makes it to the end.

(1)
Consider the Pythagorean triple $(12,5,13)$ that corresponds to the rational~$\frac{3}{2}$.
The $q$-analogues of this $q$-rational and its inverse are 
$$
\left[\frac{3}{2}\right]_q=\frac{1+q+q^2}{1+q},
\qquad\qquad
\left[\frac{2}{3}\right]_q=\frac{q+q^2}{1+q+q^2}.
$$
Applying~\eqref{qEucl1}--\eqref{qEucl3}, we get
\begin{eqnarray*}
\A_{\frac{3}{2}}(q)&=&
(1+q)(1+q^2)(1+q+q^2),
\\[4pt]
\B_{\frac{3}{2}}(q)&=&
1+q+q^2+q^3+q^4=[5]_q,
\\[4pt]
\cC_{\frac{3}{2}}(q)&=&
1+3q+3q^2+3q^3+2q^4+q^5.
\end{eqnarray*}

(2)
The Pythagorean triple $(56,33,65)$ corresponds to the rational~$\frac{7}{4}$.
In this case,
$$
\left[\frac{7}{4}\right]_q=\frac{1+q+2q^2+2q^3+q^4}{1+q+q^2+q^3},
\qquad\qquad
\left[\frac{4}{7}\right]_q=\frac{1+q+2q^2+2q^3+q^4}{1+q+q^2+q^3}
$$
In this case, applying~\eqref{qEucl1}-\eqref{qEucl3} we have
\begin{eqnarray*}
\A_{\frac{7}{4}}(q)&=&
(1+q)(1+q^2)(1+2q+3q^2+2q^3+3q^4+2q^5+q^6),
\\[4pt]
\B_{\frac{7}{4}}(q)&=&
(1+q+q^2)(1+q+2q^2+3q^3+2q^4+q^5+q^6),
\\[4pt]
\cC_{\frac{7}{4}}(q)&=&
1+3q+5q^2+9q^3+11q^4+12q^5+11q^6+8q^7+4q^8+q^9.
\end{eqnarray*}
It is interesting to notice that $\cC_{\frac{7}{4}}$ 
is not factorable and $\A_{\frac{7}{4}}$ is not a multiple of 
a polynomial corresponding to $7$.
Note also that all three polynomials, $\A_{\frac{7}{4}},\B_{\frac{7}{4}}$, 
and $\cC_{\frac{7}{4}}$, are unimodal.

(3) 
Take the simple example of the Pythagorean triple $(24,7,25)$
that corresponds to the rational $\frac{4}{3}$.
In this case, we found two different $q$-deformations satisfying all the conditions.
The first $q$-Pythagorean triple is obtained by applying~\eqref{qEucl1}-\eqref{qEucl3}:
\begin{eqnarray*}
\A_{\frac{4}{3}}(q) &=& (1+q)(1+q^2)^2(1+q+q^2),
\\[4pt]
\B_{\frac{4}{3}}(q) &=& 1+q+q^2+q^3+q^4+q^5+q^6=[7]_q,
\\[4pt]
\cC_{\frac{4}{3}}(q) &=& 1+3q+5q^2+5q^3+5q^4+3q^5+2q^6+q^7.
\end{eqnarray*}
But we also have another solution to~\eqref{PythEq}!
It looks completely different from the above one:
\begin{eqnarray*}
\A'_{\frac{4}{3}}(q) &=& (1+q)(1 + 10q + q^2),
\\[4pt]
\B'_{\frac{4}{3}}(q) &=& 1 + 5q + q^2,
\\[4pt]
\cC'_{\frac{4}{3}}(q) &=& 1 + 10q + 13q^2 +q^3.
\end{eqnarray*}

A complete classification of solutions to~\eqref{PythEq} is a challenging problem
which is out of reach so far.
Can we distinguish our solutions~\eqref{qEucl1}-\eqref{qEucl3} from the other classes? 
They might be the ``most quantized'' solutions, those with maximal number of terms.
Given that each term of a $q$-analogue should have a combinatorial meaning, 
this would be beneficial.
But this combinatorial meaning remains to be elucidated.

\bigskip

\noindent{\bf Acknowledgments}.
We are grateful to Perrine Jouteur for enlightening discussions.

\bigskip

\noindent
{\bf Corresponding Author}: Valentin Ovsienko.

\bigskip

\noindent
{\bf Funding declaration}: there was no funding for this manuscript.


\bigskip 

\bigskip  

\noindent
{\sc

\noindent
{Hugo Mathevet,
Universit\'e de Reims Champagne Ardenne,
Laboratoire de Math\'e\-ma\-tiques, CNRS UMR9008,
Moulin de la Housse - BP 1039,
51687 Reims cedex 2,
France,
}\\
{Email:  hugo.mathevet@univ-reims.fr}

\medskip

\noindent
{Sophie Morier-Genoud,
Universit\'e de Reims Champagne Ardenne,
Laboratoire de Math\'e\-ma\-tiques, CNRS UMR9008,
Moulin de la Housse - BP 1039,
51687 Reims cedex 2,
France,
}\\
{Email:  sophie.morier-genoud@univ-reims.fr}

\medskip

\noindent
{Valentin Ovsienko,
Centre National de la Recherche Scientifique,
Laboratoire de Math\'e\-ma\-tiques,
Universit\'e de Reims Champagne Ardenne,
Moulin de la Housse - BP 1039,
51687 Reims cedex 2,
France},\\
{Email:  valentin.ovsienko@univ-reims.fr}
}

\end{document}